\documentclass[12pt,reqno]{amsart}
\usepackage{graphicx,subfigure}
\usepackage{hyperref}
\newcommand{\vertiii}[1]{{\left\vert\kern-0.25ex\left\vert\kern-0.25ex\left\vert #1
    \right\vert\kern-0.25ex\right\vert\kern-0.25ex\right\vert}}
\hypersetup{colorlinks=true, citecolor=blue, linkcolor=red}

\numberwithin{equation}{section} \numberwithin{figure}{section}
\numberwithin{table}{section} \setlength{\oddsidemargin}{0in}
\setlength{\evensidemargin}{0in} \setlength{\textwidth}{6.5in}

\setlength{\topmargin}{-.3in} \setlength{\textheight}{9in}

{ \theoremstyle{plain}
\newtheorem{theorem}{Theorem}[section]

\newtheorem{cor}[theorem]{Corollary}
\newtheorem{rem}[theorem]{Remark}

}



\begin{document}

\title[Energy estimates for minimizers to a class of elliptic systems]
{Energy estimates for minimizers to a class of elliptic systems of
Allen-Cahn type and the Liouville property}



\author{Christos Sourdis} \address{Department of Mathematics and Applied Mathematics, University of
Crete.}
              \email{csourdis@tem.uoc.gr}           




\maketitle

\begin{abstract}
We prove a theorem for the growth of the energy of bounded,
globally minimizing solutions to a class of semilinear elliptic systems of
the form $\Delta u=\nabla W(u)$, $x\in \mathbb{R}^n$, $n\geq 2$,
with $W:\mathbb{R}^m\to \mathbb{R}$, $m\geq 1$, nonnegative and
vanishing at exactly one point (at least in the closure of the image
of the considered solution $u$). As an application, we can prove a
Liouville type theorem under various assumptions.\end{abstract}


\section{Introduction and statement of the main results}

Consider the semilinear elliptic system
\begin{equation}\label{eqEq}
\Delta u=\nabla W(u)\ \ \textrm{in}\ \ \mathbb{R}^n,\ \ n\geq 2,
\end{equation}
where $W:\mathbb{R}^m\to \mathbb{R}$, $m\geq 1$, is sufficiently
smooth and \emph{nonnegative}. This system has variational
structure, as solutions (in a smooth, bounded  domain $\Omega\subset
\mathbb{R}^n$) are critical points of the energy
\begin{equation}\label{eqEnergia}
E(v;\Omega)=\int_{\Omega}^{}\left\{\frac{1}{2}|\nabla v|^2+ W(v)
\right\}dx
\end{equation}
(subject to their own boundary conditions), where $|\nabla
v|^2=\sum_{i=1}^n |v_{x_i}|^2$. A solution $u\in
C^2(\mathbb{R}^n;\mathbb{R}^m)$ is called \emph{globally minimizing}
if
\begin{equation}\label{eqminimal}
E(u;\Omega)\leq E(u+\varphi;\Omega)
\end{equation}
for every smooth, bounded  domain $\Omega\subset \mathbb{R}^n$ and
for every $\varphi\in W^{1,2}_0(\Omega;\mathbb{R}^m)\cap
L^\infty(\Omega;\mathbb{R}^m)$ (see also \cite{fuscoCPAA} and the
references therein).

If $m\geq 2$, two are the main categories of such potentials $W$:
\begin{itemize}
  \item Those that vanish only on a discrete set of points (usually
  finite); in this case (\ref{eqEq}) is known as the vectorial
  Allen-Cahn equation and models multi-phase transitions (see \cite{baldo}, \cite{bronReih}, \cite{fonsecaTartarEdin} and the references that follow).
  \item Those that vanish on a continuum of points, as in the
  Ginzburg-Landau system (see \cite{bethuel}) or the elliptic system modeling
  phase-separation in \cite{berestykiARMA} or the one in \cite{caffareliLin}.
\end{itemize}
This article is motivated from the first class. In this setting, an
effective way to construct entire, nontrivial solutions to
(\ref{eqEq}) is to assume that $W$ is symmetric with respect to a
finite reflection group and to look for equivariant solutions. Under
proper assumptions, this roughly amounts to studying bounded,
globally minimizing solutions to (\ref{eqEq}) such that the closure
of their image contains at most one global minimum of $W$. In the
scalar case, that is $m=1$, this approach has been utilized, among
others, in \cite{cabre}. On the other side, recent progress has been made in the vector case in
\cite{alikakosARMA}, \cite{batesFuscoSmyrnelis}, and
\cite{fuscoPreprint}. In our oppinion, the main obstruction in the
  vector case is the lack of the maximum principle.
  This short discussion motivates our main result:

\begin{theorem}\label{thmMine}
Assume that $W\in C^1(\mathbb{R}^m;\mathbb{R})$, $m\geq 1$, and that
there exists  $a\in \mathbb{R}^m$ such that
\begin{equation}\label{eqpos}
W>0\ \ \textrm{in}\ \ \mathbb{R}^m\setminus \{a\}\ \ \textrm{and}\ \
W(a)=0.
\end{equation}

 If $u\in C^2(\mathbb{R}^n;\mathbb{R}^m)$, $n\geq 2$, is
a bounded, globally minimizing solution to the elliptic system
(\ref{eqEq}), we have that \begin{equation}\label{eqdesir}\lim_{R\to
\infty}\left(\frac{1}{R^{n-1}} \int_{B_R}^{}\left\{\frac{1}{2}|\nabla
u|^2+ W(u) \right\}dx\right)=0,
\end{equation}
where $B_R$ stands for the $n$-dimensional ball of radius $R$ and
center at the origin.
\end{theorem}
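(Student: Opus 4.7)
The argument splits into two steps: a preliminary linear bound $E(u;B_R)\le CR^{n-1}$ via an elementary cutoff competitor, and a sharpening to $o(R^{n-1})$ that exploits the unique-zero hypothesis $W^{-1}(0)=\{a\}$ through an adaptive choice of the shell in which to perform the transition. Global minimality and boundedness suffice for the first step; the hypothesis (\ref{eqpos}) enters only in the second, via continuity of $W$ at $a$.

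For the preliminary bound, fix a smooth cutoff $\phi_R\in C^\infty(\mathbb{R}_+;[0,1])$ with $\phi_R\equiv 0$ on $[0,R-1]$, $\phi_R\equiv 1$ on $[R,\infty)$, $|\phi_R'|\le 2$, and set $v_R(x)=a+\phi_R(|x|)(u(x)-a)$. Then $v_R-u\in W^{1,2}_0\cap L^\infty(B_R;\mathbb{R}^m)$ (it vanishes outside $B_R$ and equals $a-u$ on $B_{R-1}$), while on the annulus $A=B_R\setminus B_{R-1}$ the integrand of $E(v_R;\cdot)$ is pointwise bounded: boundedness of $u$ together with interior elliptic regularity for $\Delta u=\nabla W(u)$ yield $|\nabla u|\le C_0$ uniformly, and $W$ is bounded on the range of $v_R$. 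Hence $E(v_R;B_R)\le C|A|\le CR^{n-1}$, and (\ref{eqminimal}) delivers $E(u;B_R)\le CR^{n-1}$.

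For the sharpening, fix $\varepsilon>0$ and set $c_\varepsilon:=\inf\{W(y):\varepsilon\le|y-a|\le M\}$ and $C_W(\varepsilon):=\sup\{W(y):|y-a|\le\varepsilon\}$, where $M=\|u\|_{L^\infty}$; by (\ref{eqpos}) and continuity, $c_\varepsilon>0$ and $C_W(\varepsilon)\to 0$ as $\varepsilon\to 0^+$. Chebyshev applied to $\int_{B_R}W(u)\le E(u;B_R)\le CR^{n-1}$ gives $|\{x\in B_R:|u(x)-a|\ge\varepsilon\}|\le CR^{n-1}/c_\varepsilon$. Partition $B_R\setminus B_{R/2}$ into $N$ concentric sub-shells of equal radial width $r=R/(2N)$. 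Summing the weight $E(u;\Sigma_i)+M^2|\{|u-a|\ge\varepsilon\}\cap\Sigma_i|$ over the $N$ sub-shells is bounded by $C(\varepsilon)R^{n-1}$, so by pigeonhole there is a good sub-shell $\Sigma=B_{\rho_+}\setminus B_{\rho_-}$ (with $\rho_\pm\in[R/2,R]$) on which this weight is at most $C(\varepsilon)R^{n-1}/N$. Now apply (\ref{eqminimal}) with the competitor $w$ equal to $u$ for $|x|\ge\rho_+$, equal to $a$ for $|x|\le\rho_-$, and $w=a+\psi(|x|)(u-a)$ on $\Sigma$ for a linear radial cutoff $\psi$ with $|\psi'|\le 1/r$. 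Using $|\nabla w|^2\le 2|\nabla u|^2+2|u-a|^2/r^2$, splitting $\int_\Sigma W(w)$ according to whether $|u-a|<\varepsilon$ (in which case $|w-a|\le|u-a|<\varepsilon$ and hence $W(w)\le C_W(\varepsilon)$) or $|u-a|\ge\varepsilon$ (where $W(w)\le\Lambda:=\sup_{|y|\le M}W(y)$), and using $|\Sigma|\sim R^n/N$, one obtains
\begin{equation*}
E(u;B_{R/2})\le E(u;B_{\rho_+})\le E(w;\Sigma)\le C(\varepsilon)\frac{R^{n-1}}{N}+CM^2R^{n-2}N+C_W(\varepsilon)\frac{R^n}{N}.
\end{equation*}
Choosing $N\sim R\sqrt{C_W(\varepsilon)}$ to balance the last two terms yields $E(u;B_{R/2})\le C\sqrt{C_W(\varepsilon)}\,R^{n-1}+o(R^{n-1})$ as $R\to\infty$, so $\limsup_{R\to\infty}R^{1-n}E(u;B_R)\le C\sqrt{C_W(\varepsilon)}$. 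Sending $\varepsilon\to 0^+$ proves (\ref{eqdesir}).

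The technical heart of the proof is the \emph{simultaneous} selection of a shell on which both the Dirichlet energy of $u$ and the bad set $\{|u-a|\ge\varepsilon\}$ are small; without the latter, $\int_\Sigma W(w)$ would contribute a term $\Lambda|\Sigma|\sim R^{n-1}$, exactly of the order one wishes to beat, and the competitor argument would only recover the preliminary bound. The weighted pigeonhole handles both constraints at once. The single-zero hypothesis $W^{-1}(0)=\{a\}$ is used solely through $C_W(\varepsilon)\to 0$; were $W$ to vanish on a larger set, the same scheme would leave a positive constant of Modica--Mortola surface-tension type on the right-hand side, consistent with the $\Gamma$-convergence heuristic for Allen--Cahn.
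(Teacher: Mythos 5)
Your proposal is correct, and it reaches (\ref{eqdesir}) by a genuinely different route than the paper. Both proofs share the preliminary linear bound (your cutoff competitor is the paper's (\ref{eqAmbrosio21})), but from there the paper works on a good \emph{sphere}: it uses the co-area formula to select $S_R\in(R,2R)$ with boundary energy $O(R^{n-2})$ as in (\ref{eqcoarea}), then runs the B\'ethuel--Brezis--H\'elein ``bad discs'' machinery (a clearing-out lemma based on the uniform $C^{0,\alpha}$ bound on the energy density, plus a Vitali covering) to get \emph{pointwise} smallness $|u-a|\le m_\epsilon$ outside $O(R^{n-2})$ unit discs on $\partial B_{S_R}$, and finally interpolates to $a$ across a shell of fixed width $1$. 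You instead work with a good \emph{wide shell}: Chebyshev turns the linear energy bound into a volume bound $O(R^{n-1}/c_\varepsilon)$ on $\{|u-a|\ge\varepsilon\}$, a weighted pigeonhole over $N$ sub-shells of $B_R\setminus B_{R/2}$ selects a shell of width $R/(2N)$ on which both the energy of $u$ and the bad volume are $O(R^{n-1}/N)$, and the large width (rather than pointwise closeness to $a$) is what tames the cutoff gradient term, with continuity of $W$ at $a$ controlling the potential term and the balance $N\sim R\sqrt{C_W(\varepsilon)}$ closing the estimate; the chain $E(u;B_{R/2})\le E(u;B_{\rho_+})\le E(w;B_{\rho_+})=E(w;\Sigma)$ is legitimate since $w\equiv a$ on $B_{\rho_-}$ and $w-u\in W^{1,2}_0(B_{\rho_+})\cap L^\infty$. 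What each approach buys: yours is softer and more elementary --- no covering argument, no H\"older bound on the energy density (only the uniform $C^1$ bound for the preliminary step), only continuity of $W$ through $c_\varepsilon>0$ and $C_W(\varepsilon)\to0$ --- and the simultaneous (weighted) pigeonhole is an efficient device; the paper's construction, on the other hand, produces pointwise information on a large sphere that is re-used elsewhere, namely in the boundary estimate (\ref{eqbdry}) for Case (b) of Theorem \ref{thmLiouva} and in the quantitative bootstrap of Theorem \ref{thmMYNIG}, where the explicit dependence of $\mu_\epsilon$ and $m_\epsilon$ on $\epsilon=R^{-\beta}$ is what yields the rate $R^{n-1-\tau}$ (your scheme could likely be quantified too under (\ref{eqkatzour}), but the exponents would have to be re-derived). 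Two cosmetic points you should fix when writing this up: take $N=\lceil R\sqrt{C_W(\varepsilon)}\rceil\ge1$ an integer (valid for $R$ large at fixed $\varepsilon$, and note $C_W(\varepsilon)>0$ by (\ref{eqpos})), and define $\Lambda=\sup_{|y-a|\le M}W(y)$ with $M=\|u-a\|_{L^\infty}$ so that the bound $W(w)\le\Lambda$ on the bad set is literally justified.
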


We emphasize that there is no assumption for the behavior of $W$
near $a$. To the best of our knowledge, besides the ordinary differential equation case $n=1$ (see
\cite{rabi}, \cite{Sourheteroclinic}), this is \emph{the first nontrivial result for the
vector case in this generality}.

Our proof of Theorem \ref{thmMine} is based on an adaptation to this
setting of the famous ``bad discs'' construction of \cite{bethuel}
from the study of vortices in the Ginzburg-Landau model. We stress
that, to the best of our knowledge, \emph{this is the first
application of this powerful technique to the study of the vector
Allen-Cahn equation}.

Moreover, we can provide a quantitative version of Theorem
\ref{thmMine}.

\begin{theorem}\label{thmMYNIG}
Assume that $W\in C^2(\mathbb{R}^m;\mathbb{R})$, $m\geq 1$,
satisfies (\ref{eqpos}) and that
 there exist $C_{0},r_1>0$ and $q\geq 2$ such that
\begin{equation}\label{eqkatzour}
W(a+r\nu)\geq C_{0}r^q,\ \ \textrm{where}\ \ \nu \in
\mathbb{S}^{m-1}, \ \ \textrm{for}\ r\in (0,r_1).
\end{equation}

 If $u\in C^2(\mathbb{R}^n;\mathbb{R}^m)$, $n\geq 2$, is
a bounded, globally minimizing solution to the elliptic system
(\ref{eqEq}), given positive $\tau<\frac{2}{qn}$, there exists $C(\tau)>0$ such that
\[ \int_{B_R}^{}\left\{\frac{1}{2}|\nabla u|^2+ W(u)
\right\}dx\leq C(\tau)R^{n-1-\tau},\ \ R>0.
\]
\end{theorem}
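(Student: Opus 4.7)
\emph{Plan.} The strategy is to establish a self-improving recursive inequality for $e(R):=\int_{B_R}\bigl(\tfrac12|\nabla u|^2+W(u)\bigr)dx$ by comparing $u$ to an explicit radial competitor, and then to iterate, using Theorem~\ref{thmMine} as a qualitative base.

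First, the growth hypothesis (\ref{eqkatzour}), combined with the boundedness of $u$ and continuity of $W$, produces a constant $c>0$ (depending on $\|u\|_\infty$) with $W(y)\ge c|y-a|^q$ on the range of $u$; hence $\int_{B_R}|u-a|^q\,dx\le c^{-1}e(R)$. Because $W\in C^2$, $W(a)=0$, and $a$ is a global minimum, $\nabla W(a)=0$, so Taylor's theorem plus boundedness gives the matching upper bound $W(y)\le C|y-a|^2$ on the range of $u$. A standard pigeonhole argument on $[R/2,R]$ yields a radius $\rho_*\in[R/2,R]$ satisfying simultaneously
\[
\int_{\partial B_{\rho_*}}|u-a|^q\,dS\le \frac{Ce(R)}{R},\qquad \int_{\partial B_{\rho_*}}|\nabla u|^2\,dS\le \frac{Ce(R)}{R},
\]
and H\"older's inequality on the sphere (of surface area $\sim R^{n-1}$) upgrades the first of these to
\[
\int_{\partial B_{\rho_*}}|u-a|^2\,dS\le C\,e(R)^{2/q}\,R^{n-1-2n/q}.
\]

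Second, for a parameter $r\in(0,\rho_*)$ to be chosen, the competitor
\[
v(s\omega)=\begin{cases}a,& 0\le s\le r,\\[2pt] a+\dfrac{s-r}{\rho_*-r}\bigl(u(\rho_*\omega)-a\bigr),& r\le s\le\rho_*,\end{cases}
\]
satisfies $v=u$ on $\partial B_{\rho_*}$ and $v-u\in W^{1,2}_0(B_{\rho_*};\mathbb{R}^m)\cap L^\infty$. Global minimality (\ref{eqminimal}) gives $e(\rho_*)\le E(v;B_{\rho_*})$. A direct computation in spherical coordinates, together with $W(v)\le C|v-a|^2$, bounds $E(v;B_{\rho_*})$ by an expression involving $h:=\rho_*-r$ and the two sphere integrals above. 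Since for $q\ge 2$ the H\"older bound on $\int|u-a|^2$ dominates the direct bound on $\int|\nabla u|^2$, optimizing over $h$ produces the recursion
\[
e(R/2)\le e(\rho_*)\le C\,e(R)^{2/q}\,R^{n-1-2n/q}.
\]

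Third, feeding the ansatz $e(R)\le CR^{n-1-\tau}$ into this recursion gives the improved bound $e(R/2)\le C'R^{n-1-\tau'}$ with $\tau'=\tfrac{2(1+\tau)}{q}$. Starting from $\tau_0=0$ and iterating, the iterates $\tau_k$ increase monotonically toward $\tfrac{2}{q-2}$, so for any $\tau<\tfrac{2}{qn}$ finitely many iterations produce the asserted estimate $e(R)\le C(\tau)R^{n-1-\tau}$. The main technical obstacle is the bookkeeping of multiplicative constants through the iteration to keep $C(\tau)$ finite for each $\tau$ strictly below the limit, together with the separate treatment of the borderline case $q=2$, where the recursion does not improve $\tau$ directly but instead produces a strong Liouville-type conclusion ($u\equiv a$) that makes the estimate trivial.
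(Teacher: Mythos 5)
Your proposal is correct in substance, but it proceeds by a genuinely different route than the paper, and in fact proves more than the stated theorem. The paper re-runs the bad-disc construction of Theorem \ref{thmMine} with an $R$-dependent threshold $\epsilon_R=R^{-\beta}$: integral smallness on a good sphere is upgraded to pointwise smallness at the price of the clearing-out constant $\mu_{\epsilon}\sim\epsilon^{n}$ from (\ref{eqmuuu}), the bad balls are counted as in (\ref{eqcard}), and balancing $R^{n-1-2\beta/q}$ against $R^{k-1+\beta n}$ yields the contraction $k\mapsto n-1-\frac{2(n-k)}{qn+2}$, whose fixed point is $n-1-\frac{2}{qn}$. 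You never pass to sup bounds at all: the lower bound $W\geq c|u-a|^{q}$ on the range of $u$ (from (\ref{eqpos}), (\ref{eqkatzour}) and boundedness), a pigeonhole radius $\rho_{*}\in[R/2,R]$, H\"older on $\partial B_{\rho_{*}}$, the Taylor bound $W(y)\leq C|y-a|^{2}$, and the interpolation competitor across an annulus of width $h\simeq 1$ give the recursion $e(R/2)\leq C\left(e(R)^{2/q}R^{n-1-2n/q}+e(R)R^{-1}\right)$, i.e. the deficit map $\tau\mapsto 2(1+\tau)/q$. Avoiding the clearing-out step removes the $\epsilon^{n}$ loss, so your fixed point is $\frac{2}{q-2}$ for $q>2$ (a single iteration already reaches every $\tau<\frac{2}{q}$, which covers all $\tau<\frac{2}{qn}$), and for $q=2$ the iteration forces $e\equiv 0$, hence $u\equiv a$; both conclusions are stronger than the theorem and consistent with case (c) of Theorem \ref{thmLiouva}.

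Three points should be tightened, none fatal. First, discarding the gradient-trace term $e(R)R^{-1}$ in favour of the H\"older term is not a pointwise comparison of the two boundary integrals; it is only valid after inserting the current bound $e(R)\leq C_{k}R^{\alpha_{k}}$ and checking $\alpha_{k}-1\leq \frac{2}{q}\alpha_{k}+n-1-\frac{2n}{q}$, which is equivalent to $\alpha_{k}\leq n$ and hence always true since $\alpha_{k}\leq n-1$; keep both terms in the recursion and verify this instead of asserting domination. Second, there is nothing to optimize in $h$: the potential contribution $h\int_{\partial B_{\rho_{*}}}|u-a|^{2}dS$ caps $h$ at order one, so simply take $h=1$ with $R\geq 4$ so that the annulus stays in $\{s\geq\rho_{*}/2\}$. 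Third, your description of $q=2$ is inaccurate — there the recursion improves the exponent by a full power of $R$ per step, with constants growing only geometrically, which is exactly what yields $u\equiv a$ — but the conclusion you draw is correct, and for the theorem as stated one step suffices anyway because $\frac{2}{qn}<\frac{2}{q}$. The range $R\leq 1$ is trivial since $e(R)\leq CR^{n}$ there.
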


As an application of Theorem \ref{thmMine}, we can prove the
following Liouville type theorem.

\begin{theorem}\label{thmLiouva}
Assume that $W$ and $u$ are as in Theorem \ref{thmMine}. Then, it
holds that
\[
u\equiv a,
\]
provided that one of the following additional conditions holds:
\begin{description}
  \item[(a)] $m=1$ and $W\in C^{1,1}_{loc}(\mathbb{R};\mathbb{R})$; or $m\geq
  1$ and $u$ is radially symmetric; or $m\geq
  1$ and Modica's gradient bound holds, that is
  \begin{equation}\label{eqmodica}
\frac{1}{2}|\nabla u|^2\leq W(u) \ \ \textrm{in}\ \ \mathbb{R}^n.
\end{equation}
  \item[(b)]$n=2$ and there exists small $r_0>0$ such
that the functions
\begin{equation}\label{eqmonot}
r\mapsto W(a+r\nu),\ \ \textrm{where}\ \ \nu \in \mathbb{S}^{m-1}, \
\ \textrm{are strictly increasing\ for}\ r\in (0,r_0];
\end{equation}
or $n=2$, $W\in C^{1,1}_{loc}(\mathbb{R}^m;\mathbb{R})$, and the
above functions are nondecreasing for $r \in (0,r_0]$; or $n=2$ and
$m=1$.
  \item[(c)] $W\in C^2(\mathbb{R}^m;\mathbb{R})$ and
  \begin{equation}\label{eqAF}
W_{uu}(a)\nu\cdot \nu >0\ \ \textrm{for all}\ \ \nu \in
\mathbb{S}^{m-1},
  \end{equation}
where $\cdot$ stands for the Euclidean inner product in
$\mathbb{R}^m$.
\end{description}
\end{theorem}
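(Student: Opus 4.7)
The plan is to deduce $u\equiv a$ in each of the three cases by upgrading the energy bound of Theorem~\ref{thmMine} (or its quantitative refinement Theorem~\ref{thmMYNIG} in case (c)) using the extra structural hypothesis; the common target is to force $E(u;B_{R})=0$ for every $R$, which by (\ref{eqpos}) is equivalent to $u\equiv a$.

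\textbf{Case (a).} In each of the three subcases I would first establish Modica's gradient bound
\begin{equation*}
\tfrac{1}{2}|\nabla u|^{2}\leq W(u)\qquad\textrm{in }\mathbb{R}^{n}.
\end{equation*}
For $m=1$ with $W\in C^{1,1}_{\mathrm{loc}}$ this is Modica's classical theorem; the third subcase is exactly (\ref{eqmodica}); and for radial $u$ I would derive it from the integrated identity
\[
R^{n-1}\bigl[W(u(R))-\tfrac{1}{2}|u'(R)|^{2}\bigr]=(n-1)\int_{0}^{R}r^{n-2}\bigl[\tfrac{1}{2}|u'|^{2}+W(u)\bigr]\,dr\;\geq\;0,
\]
obtained by pairing the radial ODE $u''+\tfrac{n-1}{r}u'=\nabla W(u)$ with $r^{n-1}u'$ and integrating over $[0,R]$. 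Once (\ref{eqmodica}) holds, the classical Modica--Caffarelli--Lin monotonicity formula asserts that $\Phi(R):=R^{1-n}E(u;B_{R})$ is nondecreasing in $R$, and Theorem~\ref{thmMine} forces $\Phi(R)\to 0$; hence $\Phi\equiv 0$, giving $|\nabla u|\equiv 0$ and $W(u)\equiv 0$, so $u\equiv a$.

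\textbf{Case (b).} The scalar subcase $n=2$, $m=1$ is handled by Modica's scalar monotonicity (after mollifying $W$ if necessary) and reduces to case (a). For the two vector-valued subcases I argue by contradiction: if $u(x_{0})\neq a$, fix a small ball $B_{\rho}(x_{0})$ on which $0<|u-a|<r_{0}$ and introduce the admissible competitor
\[
\tilde u(x)=\eta_{R}(|x|)\bigl(u(x)-a\bigr)+a,
\]
where $\eta_{R}$ is a radial logarithmic cutoff equal to $0$ on $B_{R^{1/2}}$ and $1$ outside $B_{R}$; the two-dimensional choice gives $\int_{B_{R}}|\nabla\eta_{R}|^{2}\,dx=O(1/\log R)$, while the monotonicity assumption (\ref{eqmonot}) furnishes the one-sided pointwise bound $W(\tilde u)\leq W(u)$ on $\{|u-a|\leq r_{0}\}$. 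Combining $E(u;B_{R})\leq E(\tilde u;B_{R})$ with Theorem~\ref{thmMine} and sending $R\to\infty$ would drive $\int_{B_{\rho}(x_{0})}W(u)=0$, a contradiction (in the non-strict variant, the $C^{1,1}_{\mathrm{loc}}$ regularity of $W$ is used to quantitatively absorb cross-terms coming from the cutoff). I expect this case to be the most delicate: without the maximum principle or Modica's inequality, the entire argument must be driven by the one-sided comparison $W(\tilde u)\leq W(u)$ coming from radial monotonicity.

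\textbf{Case (c).} Hypothesis (\ref{eqAF}) immediately gives (\ref{eqkatzour}) with $q=2$, so Theorem~\ref{thmMYNIG} yields $E(u;B_{R})\leq C(\tau)R^{n-1-\tau}$ for every $\tau<1/n$. Combining this bound with elliptic regularity and the quadratic lower bound on $W$ near $a$, I would show that $u\to a$ uniformly at infinity; linearising the equation at $a$ then gives $\Delta w\approx W_{uu}(a)\,w$ for $w:=u-a$ in the far field, and since $W_{uu}(a)$ is positive definite, a Phragm\'en--Lindel\"of argument yields exponential decay of $w$. Minimality and the resulting finiteness of the total energy then force $w\equiv 0$, i.e.\ $u\equiv a$.
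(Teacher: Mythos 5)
Your Case (a) is correct and is essentially the paper's own argument: reduce each subcase to Modica's bound (\ref{eqmodica}) (your radial Pohozaev-type identity is a valid way to get it for radial solutions) and then combine the strong monotonicity formula with Theorem \ref{thmMine}. The other two cases, however, have genuine gaps. In Case (b), your competitor $\tilde u=a+\eta_R(|x|)(u-a)$ satisfies $W(\tilde u)\le W(u)$ only at points where $|u(x)-a|\le r_0$, and nothing controls $W(\tilde u)-W(u)$ on the set $\{|u-a|>r_0\}$: Theorem \ref{thmMine} (via co-area and clearing-out) gives smallness of $|u-a|$ only on selected circles $\partial B_{S_R}$, and ruling out excursions far from $a$ in the interior is exactly the difficulty created by the lack of a maximum principle in the vector case. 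That bad set has measure which is only $o(R)$, so the error it contributes to the energy comparison need not vanish; moreover the cross term $\int_{B_R\setminus B_{\sqrt R}}\eta_R|\eta_R'|\,|\nabla u|\,|u-a|\,dx$ for a logarithmic cutoff is of order $R/\log R$, not $o(1)$, and cannot be absorbed using only $E(u;B_R)=o(R)$. The paper closes this case by first proving $\max_{|x|=S_R}|u-a|\to 0$ and then propagating this boundary smallness inward with the variational maximum principle of \cite{alikakosPreprint} (Theorem \ref{thmAF}, extended in Appendix \ref{appMAX} to the nondecreasing case via unique continuation); the relevant competitor there truncates the modulus $|u-a|$ in the \emph{target} space, not by a cutoff in $x$ --- this is the idea missing from your sketch. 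Also, in the subcase $n=2$, $m=1$ with $W$ merely $C^1$, ``mollifying $W$'' is not legitimate: $u$ solves the equation and is minimal for the original $W$, and Modica's inequality is only known for $W\in C^{1,1}_{loc}$ \cite{farinaFlat}; the paper instead uses a purely variational Villegas-type competitor $\min\{u,u_j\}$ \cite{villegasCpaa}.

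In Case (c), the decisive step ``$u\to a$ uniformly at infinity'' does not follow from the bound $E(u;B_R)\le C(\tau)R^{n-1-\tau}$ of Theorem \ref{thmMYNIG}: unit balls each carrying a fixed positive amount of energy can drift off to infinity without violating such a (divergent) bound, so no pointwise information results and the subsequent linearisation and Phragm\'en--Lindel\"of argument never gets started. This is precisely where the paper invokes the density estimates of \cite{cafaCordoba} for $m=1$ and \cite{AlikakosDensity} for $m\ge 2$: setting $\varepsilon=1/R$ and $u_\varepsilon(y)=u(y/\varepsilon)$, Theorem \ref{thmMine} together with (\ref{eqAF}) gives $\|u_\varepsilon-a\|_{L^1(B_2)}\to 0$, and the density estimates upgrade this to $\|u_\varepsilon-a\|_{L^\infty(B_1)}\to 0$, i.e.\ $\max_{|x|\le R}|u-a|\to 0$, which forces $u\equiv a$. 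Without these density estimates (or an equivalent pointwise estimate for minimizers), your Case (c) does not close.
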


The above theorem was originally proven by different techniques in
\cite{fuscoCPAA} (see also the earlier paper \cite{fuscoPreprint}),
under the conditions that ${W\in C^2(\mathbb{R}^m;\mathbb{R})}$ and
$u$ satisfy the assumptions of Theorem \ref{thmMine}, and that the
functions in (\ref{eqmonot}) have a \emph{strictly positive second
derivative} in $(0,r_0)$. In particular, the approach of the latter references is
based on a quantitative refinement of the replacement lemmas in \cite{alikakosARMA} and \cite{fuscoTrans}.
If $W$  additionally satisfies the
stronger assumption (\ref{eqAF}), this theorem was recently re-proven in
\cite{AlikakosDensity} by extending to this setting the density estimates of \cite{cafaCordoba}. In
the aforementioned references, the Liouville type theorem was proven
by an application of a basic pointwise estimate. However, it is not
difficult to convince oneself that the opposite direction is also
possible, that is the pointwise estimate follows from the Liouville property (see also \cite{sourdisUnif} for this viewpoint). We note that the pointwise estimate is the one that is
directly applicable in relation to the discussion preceding Theorem
\ref{thmMine}. This pointwise estimate roughly says that if $W$ (as in Theorem \ref{thmMine}) is such
that the Liouville type theorem holds, then a globally minimizing
solution, defined in a sufficiently large ball (with the appropriate
modifications in the definition), and bounded independently of the
size of the ball, has to be close to $a$ in the ball of half the
radius.

  In
the scalar case, under the assumptions of  the first part of Case
(a) above, this theorem can also be proven by using radial barriers
as in \cite{sourdisUnif}. On the other side, \emph{this powerful, but
intrinsically scalar technique, does not seem to work under the
minimal assumptions of the last part of Case (b)}.

In our opinion, three are the main advantages of our approach:
Firstly, we can treat in a unified and coordinate way various
situations. Secondly, in our opinion, our approach is considerably
simpler than those in the aforementioned references. Lastly, to the
best of our knowledge, it provides the strongest available result
when $n=2$ for any $m\geq 1$, \emph{even for the extensively studied scalar case}. This may seem too restrictive at first, but keep in mind that
 the dimensions $n=2,3$ are the ones with physical interest. In
fact, the  majority of papers on the subject deals exclusively
with these dimensions (see  \cite{alamaGui}, \cite{alesioITE}, \cite{bronReih},
 \cite{saez} for $n=2$ and \cite{guiSchat} for $n=3$).

 The proof of Theorem
\ref{thmLiouva} is based on combining Theorem \ref{thmMine} with a
variety of diverse results that are available in the literature.

In the sequel, we will provide the proofs of our main results. We
will close the paper with two appendixes that are used in the proofs,
but are also of independent interest and contain new results.

\section{Proof of the main results}\label{secProofs}

\subsection{Proof of Theorem \ref{thmMine}}

\begin{proof}
Throughout this proof, we will denote the energy density of $u$ by
\begin{equation}\label{eqeee}
e(x)=\frac{1}{2}|\nabla u(x)|^2+W\left( u(x) \right),\ \ x\in
\mathbb{R}^n.
\end{equation}

Firstly, note that standard elliptic regularity theory and Sobolev
imbeddings  \cite{evans,Gilbarg-Trudinger}, in combination  with the
fact that $u$ is bounded, yield that
\begin{equation}\label{eqapriori}
\|u\|_{C^{1,\alpha}(\mathbb{R}^n;\mathbb{R}^m)}\leq C_1,
\end{equation}
for some $\alpha\in (0,1)$ and $C_1>0$ (in fact, it holds for any
$\alpha\in (0,1)$ provided that $C_1=C_1(\alpha)>0$). We point out
that this is the only place where we use that $W\in C^2$.

Since $u$ is a globally minimizing solution, by comparing its energy
to that of a suitable test function which agrees with $u$ on
$\partial B_R$ and is identically $a$ in $B_{R-1}$, we find that
\begin{equation}\label{eqAmbrosio21}
\int_{B_R}^{}e(x)dx\leq C_2 R^{n-1},\ \ R\geq 1,
\end{equation}
for some $C_2>0$ (see also \cite{cafaCordoba}).

Therefore, by (\ref{eqAmbrosio21}), the co-area formula (see for
instance \cite[Ap. C]{evans}), the nonnegativity of $W$, and the
mean value theorem, there exist
\begin{equation}\label{eqSR}
S_R\in \left(R,2R\right)
\end{equation}
such that
\begin{equation}\label{eqcoarea}
\int_{\partial B_{S_R}}^{} e(x)dS(x)\leq C_3R^{n-2},\ \ R\geq 1,
\end{equation}
for some $C_3>0$ (actually, we can take $C_3=\frac{C_2}{2}$).

Let $\epsilon>0$ be any small number. By virtue of
(\ref{eqapriori}),  we can infer that the subset of $\partial
B_{S_R}$ where $e(x)$ is above $\epsilon$ is contained in at most
$\mathcal{O}(R^{n-2})$ number of geodesic  balls of radius $1$ as
$R\to \infty$ (the so-called ``bad discs'', see \cite{bethuel}).
More precisely, there exist $N_{\epsilon,R}\geq 0$ points
$\{x_{R,1},\cdots,x_{R,N_{\epsilon,R}}\}$ on $\partial B_{S_R}$ such
that
\[
e(x)\geq \epsilon\ \ \textrm{if}\ \ x\in U_R(x_{R,i},1),\ \
i=1,\cdots,N_{\epsilon,R},
\]
and
\begin{equation}\label{eqgood}
e(x)\leq \epsilon\ \ \textrm{if}\ \   x\in \partial B_{S_R}
\setminus \bigcup_{i=1}^{N_{\epsilon,R}}U_R(x_{R,i}, 1),
\end{equation}
for  $R\gg 1$, where $U_R(p,r)\subset \partial B_{S_R}$ stands for
the geodesic ball with center at $p$ and radius $r$. Moreover, we
have that
\begin{equation}\label{eqN}
N_{\epsilon,R}\leq M_\epsilon R^{n-2},\ \ R\gg 1\ (\textrm{with}\
M_\epsilon>0\ \textrm{independent of} \ R).
\end{equation}

In the sequel, we will prove the above properties by adapting some
arguments from \cite{bethuel}. Firstly, we prove a
\emph{clearing-out property}. Note that (\ref{eqapriori}) implies
that there exists $\mu_\epsilon<\epsilon$ such that
\[
\int_{U_R(y,2)}^{}e(x)dS(x)<\mu_\epsilon\ \ \textrm{for some}\ y\in
\partial B_{S_R} \] implies that
\[e(x)\leq \epsilon,\ \ x\in U_R(y,1),
\]
for $R\geq 1$. Indeed, suppose that
\begin{equation}\label{eqexclude}e(z)\geq \epsilon\ \textrm{for
some} \ y\in \partial B_{S_R} \ \textrm{and}\ z\in
U_R(y,1).\end{equation} From (\ref{eqapriori}), there exists $C_4>0$
such that
\[\| e\|_{C^{0,\alpha}(\mathbb{R}^n;\mathbb{R})}\leq C_4.
\]
It then follows that
\[
e(x)\geq \epsilon-C_4 d^\alpha,\ \ x\in B(z,d)=z+B_d,
\]
for all  $d< \min
\left\{1,\left(\frac{\epsilon}{2C_4}\right)^\frac{1}{\alpha}
\right\}$ (see also \cite[Lem. 2.3]{sternbergZumb}). Since $e\geq
0$, we find that
\[
\int_{U_R(y,2)}^{}e(x)dS(x)\geq \int_{U_R(z,d)}^{}e(x)dS(x)\geq
(\epsilon-C_4 d^\alpha)\left|U_R(z,d) \right|\geq
\frac{\epsilon}{2}\left|U_R(z,d)
\right|=\frac{\epsilon}{2}|\mathbb{S}^{n-1}|d^{n-1}.
\]
Hence, we can exclude the scenario (\ref{eqexclude})  by choosing
\begin{equation}\label{eqmuuu}
\mu_\epsilon=\frac{\epsilon}{2^n}|\mathbb{S}^{n-1}|\left( \min
\left\{1,\left(\frac{\epsilon}{2C_4}\right)^\frac{1}{\alpha}\right\}
\right)^{n-1}.
\end{equation}
Next, consider a finite family of geodesic  balls $U_R(x_i,1)_{i\in
I_R}$ such that
\begin{equation}\label{eqcap}
U_R\left(x_i,\frac{1}{4}\right)\cap
U_R\left(x_k,\frac{1}{4}\right)=\emptyset\ \ \textrm{if}\ \ i\neq k,
\end{equation}
\begin{equation}\label{eqcover}
\bigcup_{i\in I_R} U_R(x_i,1)\supset \partial B_{S_R},
\end{equation}
for all $R\geq 1$ (having suppressed the obvious dependence of $x_i$
on $R$).
 This is indeed possible by the Vitali's covering theorem (see \cite[Sec. 1.5]{evansGariepy} and keep in mind
 that $\partial B_{S_R}$ becomes a metric space when equipped with the geodesic distance).
 We say that the ball $U_R(x_i,1)$ is a \textbf{good
ball} if
\[
\int_{U_R(x_i,2)}^{}e(x)dS(x)<\mu_\epsilon,
\]
and that $U_R(x_i,1)$ is a \textbf{bad ball} if
\[
\int_{U_R(x_i,2)}^{}e(x)dS(x)\geq \mu_\epsilon.
\]
The collection of bad balls is labeled by
\[
J_R=\left\{i\ :\ U_R(x_i,1)\ \textrm{is\ a\ bad\ ball} \right\}.
\]
The main observation is that, by virtue of (\ref{eqcap}), there is a
universal constant $C_5>0$ (independent of both $\epsilon$ and $R$)
such that
\[
\sum_{i\in I_R} \int_{U_R(x_i,2)}^{}e(x)dS(x)\leq C_5 \int_{\partial
B_{S_R}}^{}e(x)dS(x),
\]
since each point on $\partial B_{S_R}$ is covered by at most $C_5$
geodesic balls $U_R(x_i,2)$. The latter property plainly  follows by
observing that all such balls that contain the same point are
certainly contained in a geodisc ball of radius $10$, and from the
basic fact that any $(n-1)$-dimensional ball of radius $10$ can
contain only a finite number of disjoint balls of radius
$\frac{1}{4}$. Making use of (\ref{eqcoarea}), we infer that
\begin{equation}\label{eqcard}
\textrm{card}J_R\leq \frac{C_5C_3}{\mu_\epsilon} R^{n-2},\ \ R\geq
1.
\end{equation}
Now, let $x\in \partial B_{S_R}\setminus \bigcup_{i\in
J_R}U_R(x_i,1)$. By (\ref{eqcover}), there exists some $k\in
I_R\setminus J_R$ such that $x\in U_R(x_k,1)$ which is a good ball.
It follows from the definition of $\mu_\epsilon$ that
\[
e(x)\leq \epsilon,
\]
as desired.

In view of (\ref{eqpos}) and (\ref{eqgood}), we have that
\begin{equation}\label{eqmeps}
|\nabla u(x)|^2\leq 2\epsilon\ \ \textrm{and}\ \
\left|u(x)-a\right|\leq m_\epsilon\ \ \textrm{if}\ \ x\in \partial
B_{S_R}\setminus \bigcup_{i=1}^{N_{\epsilon,R}}U_R(x_{R,i}, 1),\ \
R\gg 1,
\end{equation}
where
\begin{equation}\label{eqmepsnew}
m_\varepsilon\to 0\ \ \textrm{as}\ \ \varepsilon\to 0,
\end{equation}
(we point out that $m_\epsilon$ depends only on $\epsilon$).

We consider the function $v_R\in
W^{1,2}\left(B_{S_R};\mathbb{R}^m\right)\cap
L^\infty\left(B_{S_R};\mathbb{R}^m \right)$ which is defined, in
terms of polar coordinates, as follows:
\[
v_R(r,\theta)=\left\{\begin{array}{ll}
                       u(S_R,\theta)+\left(a-u(S_R,\theta) \right)(S_R-r), & r\in [S_R-1,S_R],\ \theta\in \mathbb{S}^{n-1}, \\
                         &   \\
                       a, & r\in [0,S_R-1],\ \theta\in \mathbb{S}^{n-1},
                     \end{array}
 \right.
\]
(having slightly abused notation, keep in mind that $x=r\theta$). We note that $v_R$ belongs in $W^{1,2}$ because it is the composition of a smooth function with a Lipschitz continuous one (see \cite[pg. 54]{kinderler} and keep in mind that we only use the polar coordinates away from the origin).
Clearly, we have that
\begin{equation}\label{eqonbdry}
v_{R}=u \ \ \textrm{on}\ \ \partial B_{S_R}.
\end{equation}

Let
\[
\mathcal{A}_R=B_{S_R}\setminus B_{(S_R-1)}\ \ \textrm{and}\ \
\mathcal{C}_R=\bigcup_{i=1}^{N_{\epsilon,R}}\left(\bar{B}_{10}(x_{R,i})\cap
\bar{\mathcal{A}}_R \right).
\]
If $x=r\theta\in \mathcal{A}_R\setminus \mathcal{C}_R$, via
(\ref{eqmeps}), it holds that \begin{equation}\label{eqthan1}
\left|v_R(x)-a \right|\leq 2 \left|u(S_R,\theta)-a \right|\leq 2
m_\epsilon.
\end{equation}
Moreover, for such $x$, we find that
\begin{equation}\label{eqthan2}
\begin{array}{rcl}
  |\nabla_{\mathbb{R}^n} v_R|^2 & = & \left|u(S_R,\theta)-a \right|^2+\frac{1}{r^2}
\left|(1+r-S_R)\nabla_{\mathbb{S}^{n-1}}u(S_R,\theta) \right|^2
   \\
    &   &   \\
 \textrm{using}\  (\ref{eqSR}), (\ref{eqmeps}):  &\leq  & m_\epsilon^2+\frac{9}{S_R^2}
\left|\nabla_{\mathbb{S}^{n-1}}u(S_R,\theta) \right|^2 \\
    &   &   \\
    & \leq  & m_\epsilon^2+9
\left|\nabla_{\mathbb{R}^n}u(S_R\theta) \right|^2 \\
    &   &   \\
  \textrm{using again}\ (\ref{eqmeps}): & \leq & m_\epsilon^2+18\epsilon,
\end{array}
\end{equation}
where we made repeated use of the identity
\[
|\nabla_{\mathbb{R}^n} v|^2=|v_r|^2+\frac{1}{R^2}
|\nabla_{\mathbb{S}^{n-1}} v|^2\ \ \textrm{on}\ \partial B_R,\ \
R>0,
\]
(see \cite[Ch. 8]{taylor}). It follows that
\[
\begin{array}{rcl}
  \int_{B_{S_R}}^{}\left\{\frac{1}{2}|\nabla v_R|^2+ W(v_R)
\right\}dx & = & \int_{\mathcal{A}_R}^{}\left\{\frac{1}{2}|\nabla
v_R|^2+ W(v_R)
\right\}dx \\
    &   &   \\
\textrm{using (\ref{eqapriori})}: & \leq & C_6
N_{\epsilon,R}+\int_{\mathcal{A}_R\setminus\mathcal{C}_R}^{}\left\{\frac{1}{2}|\nabla
v_R|^2+ W(v_R)
\right\}dx \\
    &   &  \\
\textrm{using (\ref{eqthan1}), (\ref{eqthan2})}:    & \leq & C_6
N_{\epsilon,R}+
\left(\frac{m_\epsilon^2}{2}+9\epsilon+C_7m_\epsilon \right) |\mathcal{A}_R\setminus\mathcal{C}_R|  \\
     &   &   \\
    &\leq   &  C_6 N_{\epsilon,R}+C_8(m_\epsilon+\epsilon)S_R^{n-1},
\end{array}
\]
where $C_6,C_7,C_8>0$ are independent of both $\epsilon$ and $R$.
Since $u$ is a globally minimizing solution, thanks to
(\ref{eqonbdry}), we obtain that
\begin{equation}\label{equseful}
\begin{array}{rcl}
  \int_{B_{S_R}}^{}e(x)dx & \leq & C_6 N_{\epsilon,R}+C_8 (m_\epsilon+\epsilon)S_R^{n-1} \\
    &   &   \\
  \textrm{using}\ (\ref{eqSR}), (\ref{eqN}): & \leq &  2^{n-2}C_6M_\epsilon R^{n-2}+2^{n-1}C_8
(m_\epsilon+\epsilon)R^{n-1},\ \ R\gg 1.
\end{array}
 \end{equation}
Since $\epsilon>0$ is arbitrary, in light of (\ref{eqmepsnew}), we
infer that (\ref{eqdesir}) holds, as desired.
\end{proof}

\begin{rem}
The assertion  of Theorems \ref{thmLiouva}  holds \emph{for any bounded solution}  of (\ref{eqEq}) provided that $W$ is assumed to be globally convex (see for example \cite{sourConfi}).
\end{rem}

\subsection{Proof of Theorem \ref{thmMYNIG}}

\begin{proof}
The proof is based on a bootstrap argument.
Assume that
\[
\int_{B_R}^{}e(x)dx\leq C(k) R^{k}, \ R>0, \ \textrm{for some}\ k\in
(0,n-1],
\]
(recall the notation (\ref{eqeee})). In the proof of Theorem
\ref{thmMine}, choose
\[
\epsilon_R=R^{-\beta},
\]
for $R\gg 1$, with $\beta>0$ to be chosen. In analogy to
(\ref{eqSR})-(\ref{eqcoarea}), we now take $S_R\in (R,2R)$ such that
\[
\int_{\partial B_{S_R}}^{} e(x)dS(x)\leq \frac{1}{2}C(k)R^{k-1},\ \
R>0.
\]
Let $N_R$ be the minimal number of geodesic balls of radius $1$ on
$\partial B_{S_R}$ which contain the set where $e(x)$ is above
$\epsilon_R=R^{-\beta}$. In view of (\ref{eqmuuu}), the
corresponding quantity can be chosen so that it satisfies
\[
\mu_{\epsilon_R}\geq C_{11}\epsilon_R^n=C_{11}R^{-\beta n}
\]
for $R\gg 1$, where $C_{11}>0$. In turn, as in (\ref{eqcard}), we
obtain that
\[
N_R\leq C_{12} R^{k-1+\beta n},\ \ R\gg 1,
\]
where $C_{12}>0$. By virtue of (\ref{eqkatzour}), the corresponding
quantity in (\ref{eqmeps}) satisfies
\[
m_{\epsilon_R}\leq C_{13}
\epsilon_R^\frac{1}{q}=C_{13}R^{-\frac{\beta}{q}},\ \ R\gg 1,
\]
for some $C_{13}>0$. Substituting the above in the analog of
(\ref{equseful}), yields that
\[
\int_{B_{S_R}}e(x)dx\leq
C_{14}\left(R^{n-1-\frac{2\beta}{q}}+R^{k-1+\beta n} \right),\ \
R\gg 1,
\]
for some $C_{14}>0$. We will choose $\beta$ so that the two
exponents in the above relation are equal, this gives
\[
\beta=\frac{q(n-k)}{qn+2}.
\]
We have arrived at
\[
\int_{B_R}^{}e(x)dx\leq C_{15}R^{\gamma(k)},\ \ R>0,
\]
where
\[
\gamma(k)=n-1-\frac{2(n-k)}{qn+2}.
\]
To conclude, observe that the mapping $k\to \gamma(k)$ is a
contraction with $n-1-\frac{2}{qn}$ as a fixed point.
\end{proof}

\subsection{Proof of Theorem \ref{thmLiouva}}
\begin{proof}
\underline{Case (a)} If $u$ satisfies (\ref{eqmodica}), since $W\geq
0$, it is known that the following strong monotonicity formula
holds:
\begin{equation}\label{eqmonotoniWeak}
\frac{d}{dR}\left(\frac{1}{R^{n-1}}\int_{B_R}^{}\left\{\frac{1}{2}|\nabla
u|^2+ W\left(u\right) \right\}dx\right)\geq 0,\ \ R>0,
\end{equation}
(see \cite{cafamodica}, \cite{modicaProc} for $m=1$ and \cite{alikakosBasicFacts}, \cite{caffareliLin} for
arbitrary $m\geq 1$). We point out that the fact that $u$ is minimal
is not used for this. Hence, for any positive $r<R$, we have that
\[
\frac{1}{r^{n-1}}\int_{B_r}^{}\left\{\frac{1}{2}|\nabla u|^2+
W\left(u\right) \right\}dx \leq
\frac{1}{R^{n-1}}\int_{B_R}^{}\left\{\frac{1}{2}|\nabla u|^2+
W\left(u\right) \right\}dx.
\]
By virtue of Theorem \ref{thmMine}, letting $R\to \infty$ in the
above relation yields that $u\equiv a$.
 For the reader's convenience, we will present
the proof of a seemingly new monotonicity formula in Appendix
\ref{AppMonot} which can also be used to reach the same result.

To complete the proof in this case, we note that the gradient
estimate (\ref{eqmodica}) was shown in \cite{farinaFlat} to hold for
\emph{any} bounded, entire solution when $m=1$ and $W\in
C^{1,1}_{loc}(\mathbb{R};\mathbb{R})$ is nonnegative (see
\cite{cafamodica}, \cite{modica} for earlier proofs which required
higher regularity on $W$). Lastly, it is easy to show that any
radially symmetric solution satisfies this gradient bound for any
$m\geq 1$ and $W\in C^1$ nonnegative (see \cite{sourdis14}).

\underline{Case (b)} Here we partly follow \cite{sourdis14}. Since
$n=2$, by working as in (\ref{eqcoarea}), and using the assertion of
Theorem \ref{thmMine}, we arrive at
\[
\int_{\partial B_{S_R}}^{} W\left(u(x)\right)dS(x)\to 0,\ \
\textrm{for some}\ \ S_R\in (R,2R),\ \ \textrm{as}\ \  R\to \infty.
\]
By making use of just the $C^1$-bound in (\ref{eqapriori}), and
working as we did in order to exclude (\ref{eqexclude}), we deduce that
\begin{equation}\label{eqbdry}
\max_{|x|=S_R}\left|u(x)-a \right|\to 0\ \ \textrm{as}\ \ R\to
\infty.
\end{equation}

Under the assumptions of the first part of Case (b), a recent
variational maximum principle from \cite{alikakosPreprint}  implies
that
\[
\max_{|x|\leq S_R}\left|u(x)-a \right|\leq
\max_{|x|=S_R}\left|u(x)-a \right|,
\]
(see also Appendix \ref{appMAX} herein). Moreover, as we will prove
in Appendix \ref{appMAX},  this variational maximum principle also holds under
the assumptions of the second part of Case (b). In light of (\ref{eqbdry}), by letting $R\to \infty$ in the above relation, we can
conclude that the
first two assertions in Case (b) hold.

We will establish   the validity of the last assertion of Case (b)
by borrowing some ideas from \cite{villegasCpaa}, while adopting a
more explanatory viewpoint. To this end, we will argue by
contradiction. Without loss of generality, we may assume that there
exists a sequence $R_j\to \infty$ and a $\delta>0$ such that
\[
u(x_{j})=\max_{|x|\leq S_{R_j}}u(x)\geq a+\delta,\ \ j\geq 1,
\]
for some $x_{j}\in B_{S_{R_j}}$. In particular, there exists $d\in
(0,\delta)$ such that
\[
W(a+d)<W\left(u(x_{j}) \right),\ \ j\geq 1.
\]
By virtue of (\ref{eqbdry}), we may further assume that
\begin{equation}\label{eqvilleg2}
\max_{|x|=S_{R_j}}u(x)\leq a+\frac{d}{2},\ \ j\geq 1.
\end{equation}
Let $u_j\in \left[a+d,u(x_{j})\right)$ be such that
\begin{equation}\label{eqvilleg1}
W(u_j)=\min_{u\in\left[a+d,u(x_{j})\right]}W(u).
\end{equation}
Consider the competitor function
\[
V_j(x)=\min\left\{u(x),u_j \right\},\ \ x\in  B_{S_{R_j}},
\]
which belongs in $W^{1,2}\left(B_{S_{R_j}};\mathbb{R}^m\right)\cap
L^\infty\left(B_{S_{R_j}};\mathbb{R}^m \right)$ (see for instance
\cite{dancerPlanes}) and, thanks to (\ref{eqvilleg2}), agrees with
$u$ on $\partial B_{S_{R_j}}$. To conclude, we will  show that
\[
E\left(V_j;B_{S_{R_j}} \right)<E\left(u;B_{S_{R_j}} \right),
\]
which contradicts the minimality character of $u$.
 To this aim, let \[ \mathcal{D}_j=\left\{x\in B_{S_{R_j}}\ :\ u(x)>u_j
 \right\}.
 \]
Observe that $\mathcal{D}_j$ is nonempty (since it contains
 $x_j$) and strictly contained in $B_{S_{R_j}}$ (from (\ref{eqvilleg2})).
 Then, note that
 \[
 E\left(V_j;B_{S_{R_j}}\setminus \mathcal{D}_j\right)=
 E\left(u;B_{S_{R_j}}\setminus \mathcal{D}_j\right)
 \ \
\textrm{and}\ \
 E\left(V_j; \mathcal{D}_j\right)=E\left(u_j;\mathcal{D}_j\right)<E\left(u;\mathcal{D}_j\right),
 \]
since (\ref{eqvilleg1}) holds and there exists a connected component
$\mathcal{E}_j$ of $\mathcal{D}_j$, say the one containing $x_j$,
where $u$ is nonconstant (note that $u=u_j$ on $\partial \mathcal{D}_j$).

 \underline{Case (c)} Let
\[
\varepsilon=\frac{1}{R}\ \ \textrm{and}\ \
u_\varepsilon(y)=u\left(\frac{y}{\varepsilon} \right),\ \ y\in
\mathbb{R}^n.
\]
We have that
\[
\varepsilon^2 \Delta u_\varepsilon =\nabla W(u_\varepsilon)\ \
\textrm{in}\ \ \mathbb{R}^n.
\]
Moreover, thanks to Theorem \ref{thmMine} and (\ref{eqAF}), we have
that
\[
\frac{1}{\varepsilon} \int_{B_2}^{}\left|u_\varepsilon(y)-a\right|^2
dy\to 0\ \ \textrm{as}\ \  \varepsilon \to 0.
\]
In particular, this implies that
\[
\|u_\varepsilon-a\|_{L^1 (B_2)}\to 0 \ \ \textrm{as}\ \  \varepsilon
\to 0.
\]
If $m=1$, we can obtain the uniform convergence
\[
\|u_\varepsilon-a\|_{L^\infty (B_1)}\to 0 \ \ \textrm{as}\ \
\varepsilon \to 0,
\]
by appealing to the results of \cite{cafaCordoba}.
 The latter results were recently extended to cover the case $m\geq
2$ in \cite{AlikakosDensity}. Therefore, we deduce that the above
uniform estimate holds under the assumptions of Case (c). We can
then conclude by switching back to $u$ and $R$ and letting $R\to
\infty$, similarly to Case (b).\end{proof}

\appendix

\section{A new monotonicity formula for solutions to the elliptic system $\Delta u=\nabla
W(u)$}\label{AppMonot} In this appendix, we will prove a seemingly
new monotonicity formula which can be used in the proof of the first
case of Theorem \ref{thmLiouva}.

\begin{theorem}\label{thmMonot}
If $u\in C^2(\mathbb{R}^n;\mathbb{R}^m)$, $n\geq 2, m\geq 1$, solves
(\ref{eqEq}) with $W\in C^1(\mathbb{R}^m;\mathbb{R})$
\emph{nonnegative}, we have the weak monotonicity formula:
\begin{equation}\label{eqmonotoniMYWEAK}
\frac{d}{dR}\left(\frac{1}{R^{n-2}}\int_{B_R}^{}\left\{\frac{n-2}{2}|\nabla
u|^2+ nW(u) \right\}dx\right)\geq 0,\ \ R>0.
\end{equation}
In addition, if $u$ satisfies Modica's gradient bound
(\ref{eqmodica}), we have the strong monotonicity formula:
\begin{equation}\label{eqmonotoniMYstrong}
\frac{d}{dR}\left(\frac{1}{R^{n-1}}\int_{B_R}^{}\left\{\frac{n-2}{2}|\nabla
u|^2+ nW(u) \right\}dx\right)\geq 0,\ \ R>0.
\end{equation}
\end{theorem}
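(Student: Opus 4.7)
The plan is to base both monotonicity formulas on a Pohozaev-type identity obtained by testing the equation (\ref{eqEq}) against the generator of dilations $x\cdot\nabla u$. Using $(x\cdot\nabla u)\cdot\nabla W(u)=x\cdot\nabla\bigl(W(u)\bigr)$, integrating over $B_R$, and integrating by parts twice on the left-hand side (once to move the Laplacian off $u$, and then once more to handle the resulting term $\tfrac{1}{2}x\cdot\nabla|\nabla u|^2$), I would arrive at the identity
\begin{equation}\label{eqPohoSketch}
\int_{B_R}\left\{\tfrac{n-2}{2}|\nabla u|^2+nW(u)\right\}dx = R\int_{\partial B_R}\left\{W(u)+\tfrac{1}{2}|\nabla_\tau u|^2-\tfrac{1}{2}|\partial_\nu u|^2\right\}dS,
\end{equation}
where $\partial_\nu u$ and $\nabla_\tau u$ are the normal and tangential components of $\nabla u$ on $\partial B_R$, so that $|\nabla u|^2=|\nabla_\tau u|^2+|\partial_\nu u|^2$. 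The extension from the scalar to the vector-valued setting is purely bookkeeping: one applies the identity componentwise and sums over $k=1,\dots,m$.

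Denote the left-hand side of (\ref{eqPohoSketch}) by $E(R)$. For the weak formula (\ref{eqmonotoniMYWEAK}), I would compute
\[
\frac{d}{dR}\left(\frac{E(R)}{R^{n-2}}\right)=\frac{R\,E'(R)-(n-2)E(R)}{R^{n-1}},
\]
use $E'(R)=\int_{\partial B_R}\{\tfrac{n-2}{2}|\nabla u|^2+nW(u)\}dS$ with the orthogonal decomposition above, and substitute (\ref{eqPohoSketch}) for the $(n-2)E(R)$ term. After cancellation the numerator collapses to
\[
R\int_{\partial B_R}\left\{(n-2)|\partial_\nu u|^2+2W(u)\right\}dS,
\]
which is manifestly nonnegative since $n\geq 2$ and $W\geq 0$, yielding (\ref{eqmonotoniMYWEAK}).

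For the strong monotonicity (\ref{eqmonotoniMYstrong}), I would run the same computation with $R^{n-1}$ in place of $R^{n-2}$. The numerator becomes
\[
R\int_{\partial B_R}\left\{-\tfrac{1}{2}|\nabla_\tau u|^2+\tfrac{2n-3}{2}|\partial_\nu u|^2+W(u)\right\}dS,
\]
which is not obviously nonnegative. Here Modica's bound (\ref{eqmodica}), written as $\tfrac{1}{2}|\nabla_\tau u|^2+\tfrac{1}{2}|\partial_\nu u|^2\leq W(u)$, enters the argument: substituting $-\tfrac{1}{2}|\nabla_\tau u|^2\geq \tfrac{1}{2}|\partial_\nu u|^2-W(u)$ reduces the integrand to $(n-1)|\partial_\nu u|^2\geq 0$.

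The main obstacle will be executing the Pohozaev identity (\ref{eqPohoSketch}) without algebraic error — correctly accounting for all boundary terms produced by the two integrations by parts, separating normal and tangential contributions on $\partial B_R$, and confirming that the vector-valued sum over components preserves the scalar identity. Once (\ref{eqPohoSketch}) is in hand, both monotonicity formulas are one-line consequences: the weak version needs only $W\geq 0$, while the strong version needs exactly the pointwise slack provided by Modica's gradient bound.
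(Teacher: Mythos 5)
Your proposal is correct: the Pohozaev identity you state is the right one (I checked the two integrations by parts and the vector-valued bookkeeping), and both derivative computations collapse exactly as you claim, to $R\int_{\partial B_R}\{(n-2)|\partial_\nu u|^2+2W(u)\}\,dS$ for the weak formula and, after inserting Modica's bound pointwise on $\partial B_R$, to $(n-1)R\int_{\partial B_R}|\partial_\nu u|^2\,dS$ for the strong one; both are nonnegative for $n\geq 2$, so the argument closes. Your route differs in packaging from the paper's: there the same underlying identity is encoded through the stress-energy tensor $T_{ij}=u_{,i}\cdot u_{,j}-\delta_{ij}\left(\tfrac12|\nabla u|^2+W(u)\right)$ with $\mathrm{div}\,T=0$, and instead of keeping the exact boundary term the paper discards the $|\partial_\nu u|^2$ contribution via the matrix inequality $T\geq -\left(\tfrac12|\nabla u|^2+W\right)I_n$ and then converts the remaining boundary integral into $f'(R)$ through the coarse pointwise inequalities $\tfrac12|\nabla u|^2+W\leq \tfrac{1}{n-2}\left(\tfrac{n-2}{2}|\nabla u|^2+nW\right)$ (using $W\geq 0$) or $\leq\tfrac{1}{n-1}\left(\tfrac{n-2}{2}|\nabla u|^2+nW\right)$ (using Modica), arriving at the differential inequalities $-f(R)\geq -\tfrac{R}{n-2}f'(R)$, respectively $-f(R)\geq-\tfrac{R}{n-1}f'(R)$. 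The two arguments are thus equivalent at the level of the identity used (the paper's own remark, crediting M.~Zhao, notes that all its monotonicity formulas follow from Pohozaev identities, which is precisely your starting point), but yours is self-contained (no appeal to the tensor identity from the literature) and slightly sharper, since it exhibits the explicit nonnegative densities $(n-2)|\partial_\nu u|^2+2W$ and $(n-1)|\partial_\nu u|^2$ controlling the derivative, whereas the tensor route only yields the sign; the paper's version, in exchange, avoids the normal/tangential decomposition and handles the boundary estimate in one matrix inequality. Do carry out the Pohozaev computation carefully in the final write-up, as you note, but there is no gap: with $u\in C^2$ and $W\in C^1$ every step is legitimate, and $E(R)$ is $C^1$ in $R$ by the coarea formula, so the differentiation is justified.
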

\begin{proof} By means of a direct calculation, it was shown in
\cite{alikakosBasicFacts} that, for solutions $u$ to (\ref{eqEq}),
the stress energy tensor $T(u)$, which is defined as the $n\times n$
matrix with entries
\[
T_{ij}=u_{,i}\cdot u_{,j}- \delta_{ij}\left(\frac{1}{2}|\nabla u|^2+
W\left(u\right) \right),\ \ i,j=1,\cdots,n,\ (\textrm{where}\
u_{,i}=u_{x_i}),\]
 satisfies
\begin{equation}\label{eqdiv}
\textrm{div}T(u)=0,
\end{equation}
using the notation $T=(T_1,T_2,\cdots,T_n)^\top$ and
$\textrm{div}T=(\textrm{div}
T_1,\textrm{div}T_2,\cdots,\textrm{div}T_n)^\top$, (see also
\cite{serfaty}). Observe that
\begin{equation}\label{eqtrace}
\textrm{tr}T=-\left(\frac{n-2}{2}|\nabla u|^2+ nW(u) \right),
\end{equation}
and that
\begin{equation}\label{eqpositiv}
T+\left(\frac{1}{2}|\nabla u|^2+ W\left(u\right) \right) I_n=(\nabla
u)^\top(\nabla u)\geq 0 \ \ \ \textrm{(in the matrix sense)},
\end{equation}
where $I_n$ stands for the $n\times n$ identity matrix.

As in \cite{schoen}, writing $x=(x_1,\cdots,x_n)$, and making use of
(\ref{eqdiv}), we calculate that
\begin{equation}\label{eq1}
\sum_{i,j=1}^{n}\int_{B_R}^{}\left(x_i
T_{ij}\right)_{,j}dx=\sum_{i,j=1}^{n}\int_{B_R}^{}\left\{\delta_{ij}T_{ij}+
x_{i}(T_{ij})_{,j}\right\}dx=\sum_{i=1}^{n}\int_{B_R}^{} T_{ii}dx.
\end{equation}
On the other side, from the divergence theorem, denoting $\nu=x/R$,
and making use of (\ref{eqpositiv}), we find that
\begin{equation}\label{eq2}
\sum_{i,j=1}^{n}\int_{B_R}^{}\left(x_i
T_{ij}\right)_{,j}dx=R\sum_{i,j=1}^{n}\int_{\partial B_R}^{}\nu_i
T_{ij} \nu_j dS\geq -R\int_{\partial B_R}^{}\left(\frac{1}{2}|\nabla
u|^2+ W\left(u\right) \right)dS.
\end{equation}
Since $W$ is nonnegative, if $n\geq 3$, we have that
\begin{equation}\label{eqineq1}
\frac{1}{2}|\nabla u|^2+ W\left(u\right)\leq \frac{1}{n-2}
\left(\frac{n-2}{2}|\nabla u|^2+ n W\left(u\right) \right).
\end{equation}

Let
\[
f(R)=\int_{B_R}^{}\left(\frac{n-2}{2}|\nabla u|^2+ n W\left(u\right)
\right) dx,\ \ R>0.
\]
By combining (\ref{eqtrace}), (\ref{eq1}), (\ref{eq2}) and
(\ref{eqineq1}), for $n\geq 3$, we arrive at
\[
-f (R)\geq -\frac{R}{n-2}\frac{d}{dR}f(R),\ \ R>0,
\]
which implies that
\[
\frac{d}{dR}\left(R^{2-n}f(R) \right)\geq 0,\ \ R>0,
\]
(clearly this also holds for $n=2$). We have thus shown the first
assertion of the theorem.

Suppose that $u$ additionally satisfies Modica's gradient bound
(\ref{eqmodica}). Then, we can strengthen (\ref{eqineq1}), for
$n\geq 2$, by noting that
\[
\frac{1}{2}|\nabla u|^2+ W\left(u\right)= \frac{1}{n-1}
\left(\frac{n-2}{2}|\nabla u|^2+\frac{1}{2}|\nabla u|^2+ (n-1)
W\left(u\right) \right)\leq \frac{1}{n-1} \left(\frac{n-2}{2}|\nabla
u|^2+ n W\left(u\right) \right).
\]
Now, by combining (\ref{eqtrace}), (\ref{eq1}), (\ref{eq2}) and the
above relation, we arrive at
 \[ -f (R)\geq -\frac{R}{n-1}\frac{d}{dR}f(R),\ \ R>0,
\]
which implies that
\[
\frac{d}{dR}\left(R^{1-n}f(R) \right)\geq 0,\ \ R>0,
\]
as desired.
\end{proof}
\begin{rem}
The weak monotonicity formula (\ref{eqmonotoniMYWEAK}) for the special case of the
Ginzburg-Landau system was stated (without proof) in
\cite{farinatwores}. If $m=1$ and $n=2$, the strong monotonicity
formula (\ref{eqmonotoniMYstrong}) was proven recently, by different
and intrinsically two dimensional techniques, in \cite{smyrnelis}.
\end{rem}
\begin{rem}
Mingfeng Zhao \cite{zhao} kindly informed me that all of the
monotonicity formulas in this paper can also be derived from
Pohozaev's identities for systems (see \cite{serfaty} for the case
of the Ginzburg-Landau system).
\end{rem}

\section{On a maximum principle for vector minimizers to the Allen-Cahn
energy}\label{appMAX}

In the recent paper \cite{alikakosPreprint}, the authors proved the
following variational maximum principle:

\begin{theorem}\label{thmAF}
Let $W:\mathbb{R}^m\to \mathbb{R}$ be $C^1$ and \emph{nonnegative}. Assume
that $W(a)=0$ for some $a\in \mathbb{R}^m$ and that there is $r_0>0$
such that (\ref{eqmonot}) holds. Let $\Omega \subset \mathbb{R}^n$
be an open, connected, bounded set, with $\partial \Omega$ minimally
smooth (Lipschitz continuous is enough), and suppose that $u\in
W^{1,2}(\Omega;\mathbb{R}^m)\cap L^\infty(\Omega;\mathbb{R}^m)$ is
minimal, in the sense that (\ref{eqminimal}) is satisfied.

If there holds
\begin{equation}\label{eqbdryAF}
|u(x)-a|\leq r\ \ \textrm{for}\ x\in \partial \Omega,
\end{equation}
for some $r\in \left(0,\frac{r_0}{2}\right)$, then it also holds
that
\begin{equation}\label{eqAFassert}
|u(x)-a|\leq r\ \ \textrm{for}\ x\in  \Omega.
\end{equation}
\end{theorem}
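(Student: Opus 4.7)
The plan is to argue by contradiction. Assume the open set $A = \{x \in \Omega : |u(x) - a| > r\}$ is nonempty; since $|u - a| \leq r$ on $\partial \Omega$ and $u$ is continuous (minimizers of this functional are $C^{1,\alpha}_{\mathrm{loc}}$ by standard elliptic regularity), $\overline{A}$ is compactly contained in $\Omega$, with $|u - a| = r$ on $\partial A$. I will construct an admissible competitor $v$ satisfying $E(v;\Omega) < E(u;\Omega)$, contradicting (\ref{eqminimal}).

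The natural competitor is the nearest-point projection of $u$ onto the closed convex ball $\overline{B_r(a)} \subset \mathbb{R}^m$. Define $P:\mathbb{R}^m \to \overline{B_r(a)}$ by $P(y) = a + \min\{|y-a|,r\}\,(y-a)/|y-a|$ for $y \neq a$ (and $P(a)=a$), and set $v := P\circ u$. Since $P$ is $1$-Lipschitz, $v \in W^{1,2}(\Omega;\mathbb{R}^m) \cap L^\infty(\Omega;\mathbb{R}^m)$ with $|\nabla v|^2 \leq |\nabla u|^2$ a.e.; the boundary condition forces $v = u$ on $\partial\Omega$, so $v$ is admissible. On $A$, decomposing $\nabla u$ into its radial component along $\nu := (u-a)/|u-a|$ and its tangential component gives
\[
|\nabla u|^2 - |\nabla v|^2 = |\nabla_{\mathrm{rad}} u|^2 + \left(1-\frac{r^2}{|u-a|^2}\right)|\nabla_{\mathrm{tang}} u|^2 \geq 0.
\]
Also on $A$ one has $v - a = r\nu$ and $u - a = |u-a|\,\nu$, so whenever $r < |u-a| \leq r_0$ the strict monotonicity hypothesis (\ref{eqmonot}) yields the \emph{strict} inequality $W(v) < W(u)$. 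If $A \subset \{|u-a| \leq r_0\}$, integrating delivers $E(v;\Omega) < E(u;\Omega)$, the desired contradiction.

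The main obstacle is therefore the ``bad zone'' $A_{\mathrm{bad}} := A \cap \{|u-a| > r_0\}$, where the monotonicity hypothesis provides no direct comparison between $W(u)$ and $W(v)$. This is where the explicit factor $r_0/2$ in the hypothesis ought to intervene. One route is a preliminary step that rules out $A_{\mathrm{bad}}$: replace $u$ by the analogous truncation onto $\overline{B_{r_0/2}(a)}$; every radius in this truncation that lies in $(r_0/2, r_0]$ does fall in the monotonicity range, and the buffer $r < r_0/2$ is exploited to absorb the remaining contribution from $|u-a| > r_0$. Alternatively, since $u$ is $C^2$ and classically solves (\ref{eqEq}), the function $\phi := |u-a|^2$ obeys $\Delta \phi = 2(u-a)\cdot\nabla W(u) + 2|\nabla u|^2$, and the monotonicity forces $\nabla W(y)\cdot(y-a) \geq 0$ for $0 < |y-a| \leq r_0$, so $\phi$ is weakly subharmonic on $\{|u-a|\leq r_0\} \cap \Omega$; once $A_{\mathrm{bad}} = \emptyset$ is established, the weak maximum principle immediately yields $\sup_\Omega |u-a|^2 \leq r^2$.
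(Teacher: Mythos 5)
Your projection competitor $v=P\circ u$ works only where $|u-a|\leq r_0$, and you correctly identify the obstruction on $A_{\mathrm{bad}}=\{|u-a|>r_0\}$; but neither of your two suggested routes actually removes it, so the proof has a genuine gap exactly at the crux. On $A_{\mathrm{bad}}$ the hypotheses give no control whatsoever on $W$ (Theorem \ref{thmAF} allows $W$ to be very small, or even to vanish, at points far from $a$), so $W(v)=W\bigl(a+r\nu\bigr)$ may strictly exceed $W(u)$ there; since this potential excess scales with the \emph{volume} of $A_{\mathrm{bad}}$ while the gradient saving $|\nabla u|^2-|\nabla v|^2$ is not bounded below by anything comparable, the projection competitor can have strictly \emph{more} energy than $u$ (think of a double-well $W$ with a second zero $b$ far from $a$ and $u$ occupying a large plateau near $b$). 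Projecting onto $\overline{B_{r_0/2}(a)}$ instead suffers from the identical defect on $\{|u-a|>r_0\}$, and the claim that the buffer $r<r_0/2$ lets you ``absorb'' this contribution is not an argument — there is no quantitative mechanism behind it. Your second route (subharmonicity of $|u-a|^2$ via $\nabla W(y)\cdot(y-a)\geq 0$ for $0<|y-a|\leq r_0$) is fine as far as it goes, but it explicitly presupposes $A_{\mathrm{bad}}=\emptyset$, which is precisely what has to be proved.

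The missing idea, which is the one used in \cite{alikakosPreprint} and reproduced in Appendix \ref{appMAX} of the paper (proof of Theorem \ref{thmMAXMY}), is that the competitor must send the far zone to the zero $a$ of $W$ itself, not to a sphere around $a$: with $\rho=|u-a|$, $\nu=(u-a)/\rho$, one takes $\tilde u=\min\{\rho,r\}\,\alpha(\rho)\,\nu$, where $\alpha\equiv 1$ for $\rho\leq r$, $\alpha$ interpolates linearly to $0$ on $[r,2r]$, and $\alpha\equiv 0$ for $\rho\geq 2r$. Then on $\{\rho\geq 2r\}$ (in particular on your bad zone) one only needs $W\geq 0=W(a)$, the monotonicity hypothesis (\ref{eqmonot}) is invoked solely for radii in $(0,2r)\subset(0,r_0)$ — this is exactly where the factor $r<r_0/2$ enters — and the gradient terms decrease because $\tilde\rho$ is a $1$-Lipschitz function of $\rho$ and $\tilde\rho\leq\rho$ in the angular term. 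The contradiction then comes either from strictness of the energy comparison under strict monotonicity (as in \cite{alikakosPreprint}), or, with merely nondecreasing profiles and $W\in C^{1,1}_{loc}$, from unique continuation applied to the linear system satisfied by $u-\tilde u$, as in Theorem \ref{thmMAXMY}. To repair your write-up you would have to replace the nearest-point projection by this collapsing competitor (or supply an actual mechanism ruling out $A_{\mathrm{bad}}$, which your draft does not contain); your subharmonicity observation can then serve as a clean finishing step, as you indicate.
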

The main idea of the proof is that if the assertion is violated at
some point, then one can construct a suitable  competitor function
which agrees with $u$ on $\partial \Omega$ and has \emph{strictly less}
energy, which is impossible.

In this appendix, under the slight additional regularity assumption
that $W\in C_{loc}^{1,1}$ (which is consistent with most
applications), we will show that one can conclude just by showing
that the aforementioned competitor function has \emph{less or equal}
energy. Our main observation is to apply the unique continuation
principle for linear elliptic systems (see \cite{lopez} for other
applications). As a result, under the slight additional assumption that
$W\in C_{loc}^{1,1}$, we can simplify the corresponding proof in
\cite{alikakosPreprint}. Moreover, we can allow for the functions in
(\ref{eqmonot}) to be merely nondecreasing which is crucial for
establishing the second assertion of Case (b) in Theorem
\ref{thmLiouva}.  More precisely, we have the following theorem.

\begin{theorem}\label{thmMAXMY}
Assume that $W:\mathbb{R}^m\to \mathbb{R}$ is $C^{1,1}_{loc}$,
nonnegative, such that $W(a)=0$ for some $a\in \mathbb{R}^m$ and
that the functions in (\ref{eqmonot}) are nondecreasing. Moreover,
assume that
\[
W(u)>0 \ \ \textrm{if}\ \ |u-a|<2r_0\ \ \textrm{and}\ \ u\neq a.
\]
Then, the assertion of   Theorem \ref{thmAF} remains true.
\end{theorem}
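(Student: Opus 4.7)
The plan is to adapt the contradiction strategy of Theorem \ref{thmAF}: construct a radial truncation competitor $v$ with $E(v;\Omega)\le E(u;\Omega)$, but, because the monotonicity in (\ref{eqmonot}) is only nondecreasing, the resulting energy inequality is no longer strict. The remedy is to observe that equality in the energy makes $v$ itself a minimizer, so that $\phi:=u-v$ satisfies a linear elliptic system to which one can apply the unique continuation principle; this forces $u\equiv v$ on $\Omega$ and produces the required contradiction.

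To carry this out, I would first reduce to the case $|u-a|<r$ on $\partial\Omega$ by replacing $r$ with $r+\varepsilon<r_0/2$ and letting $\varepsilon\downarrow 0$. Suppose for contradiction that $D:=\{x\in\Omega:|u(x)-a|>r\}$ is nonempty. By continuity and the now-strict boundary condition, the open set $V:=\{x\in\Omega:|u(x)-a|<r\}$ contains a neighborhood of $\partial\Omega$ and is therefore nonempty. Define
\[
v(x)=\begin{cases}u(x), & |u(x)-a|\le r,\\ a+r\,\dfrac{u(x)-a}{|u(x)-a|}, & |u(x)-a|>r.\end{cases}
\]
As the composition of $u$ with the $1$-Lipschitz nearest-point retraction of $\mathbb{R}^m$ onto $\overline{B_r(a)}$, the map $v$ lies in $W^{1,2}\cap L^\infty$, coincides with $u$ on $\partial\Omega$, and satisfies $|\nabla v|^2\le|\nabla u|^2$ almost everywhere. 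On $D$ one has $|v-a|=r<r_0$ and $|u-a|>r$; the nondecreasing character of $s\mapsto W(a+s\nu)$ on $(0,r_0]$, together with the $L^\infty$ bound on $u$ (modifying $W$ outside the range of $u$ if needed, which does not affect minimality), then gives $W(v)\le W(u)$ on $D$. Consequently $E(v;\Omega)\le E(u;\Omega)$.

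The minimality of $u$ forces $E(v;\Omega)=E(u;\Omega)$, whence $v$ is also a minimizer with the same boundary values and, by elliptic regularity using $W\in C^{1,1}_{\mathrm{loc}}$, a classical $C^{2,\alpha}$ solution of $\Delta z=\nabla W(z)$. Linearizing yields
\[
\Delta\phi=M(x)\phi,\qquad M(x):=\int_{0}^{1}\nabla^{2}W\bigl(v+t(u-v)\bigr)\,dt,
\]
a linear elliptic system for $\phi:=u-v$ with \emph{bounded} measurable coefficient matrix $M$ (boundedness is precisely where $C^{1,1}_{\mathrm{loc}}$ enters). Since $\phi\equiv 0$ on the nonempty open set $V$, the Aronszajn--Cordes unique continuation principle forces $\phi\equiv 0$ on the connected domain $\Omega$, i.e., $u\equiv v$. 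This contradicts $u\ne v$ on $D$, so $D=\emptyset$ and the proof is complete.

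The delicate step I anticipate is the pointwise comparison $W(v)\le W(u)$ on all of $D$ when the range of $u$ is not automatically contained in $\overline{B_{r_0}(a)}$, since (\ref{eqmonot}) is hypothesized only for $r\in(0,r_0]$. This can be handled either by exploiting the $L^\infty$ bound on $u$ to redefine $W$ harmlessly outside the range of $u$, or by applying the competitor-plus-unique-continuation argument iteratively to first confine $u$ to $\overline{B_{r_0}(a)}$ and then to $\overline{B_r(a)}$. All remaining pieces---the chain rule for the Lipschitz composition $v=F\circ u$, the upgrade of $v$ to a classical solution, and the invocation of Aronszajn--Cordes---are routine once $W\in C^{1,1}_{\mathrm{loc}}$ is available.
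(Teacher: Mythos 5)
Your overall mechanism is the same as the paper's: obtain only a non-strict energy inequality for a truncation competitor, conclude that the competitor is itself a minimizer and hence a solution, and then apply unique continuation to the difference, using $W\in C^{1,1}_{loc}$ to get a bounded coefficient matrix. The gap is in the competitor itself. Projecting $u$ onto $\overline{B_r(a)}$ does \emph{not} yield $E(v;\Omega)\le E(u;\Omega)$ under the stated hypotheses: the monotonicity (\ref{eqmonot}) is assumed only for radii in $(0,r_0]$, and beyond $B_{2r_0}(a)$ the only information on $W$ is nonnegativity. In particular $W$ may have further zeros, or dip below $\min_{\nu\in\mathbb{S}^{m-1}}W(a+r\nu)>0$, at values $p$ in the range of $u$ with $|p-a|>r_0$. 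At such points your competitor satisfies $W(v)=W\bigl(a+r\nu\bigr)>0$ while $W(u)$ can be arbitrarily small (even zero), so the pointwise comparison $W(v)\le W(u)$ fails on part of $D$ and the energy inequality collapses. Neither of your proposed repairs works: the problematic values lie \emph{inside} the range of $u$, so redefining $W$ outside the range cannot touch them (and off the range one may only increase $W$ without destroying minimality, which is the wrong direction anyway); and the iterative confinement to $\overline{B_{r_0}(a)}$ meets the identical obstruction at its very first step, since $W(a+r_0\nu)\le W(u)$ for $|u-a|>r_0$ is again unavailable.

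What is needed -- and what the paper does, following \cite{alikakosPreprint} -- is a competitor that is pulled all the way down to $a$ wherever $u$ is far from $a$: writing $\rho=|u-a|$ and $\nu=(u-a)/\rho$, one takes $\tilde{u}=a+\min\{\rho,r\}\,\alpha(\rho)\,\nu$, where $\alpha\equiv 1$ for $\rho\le r$, decreases linearly to $0$ on $[r,2r]$, and vanishes for $\rho\ge 2r$. Then $W$ is only ever compared along radii at most $2r<r_0$, where the nondecreasing hypothesis suffices, or against $W(a)=0$, where nonnegativity suffices; and since the modulus of $\tilde{u}$ is a $1$-Lipschitz function of $\rho$ with $|\tilde{u}-a|\le\rho$, the gradient terms are controlled through the polar splitting of the energy. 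With that competitor, the rest of your argument (equality of energies, elliptic regularity for the competitor, unique continuation for the difference, together with a reason why $u$ and the competitor agree on a set of positive measure -- your boundary $\varepsilon$-trick, or the paper's case analysis) does go through. As written, however, the energy inequality for the projection competitor is a genuine gap, not a technicality.
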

\begin{proof}
Firstly, by standard elliptic regularity theory, we have that $u$ is
a smooth solution to the elliptic system in (\ref{eqEq}) in $\Omega$ and continuous up to the boundary (under
reasonable assumptions on $\partial \Omega$). Without loss of
generality, we take $a=0$. As in \cite{alikakosPreprint}, we set
\[
\rho(x)=|u(x)|\ \
\textrm{in} \ \ \Omega\ \ \textrm{and}\ \ \nu(x)=\frac{u(x)}{\rho(x)}
\ \
\textrm{in} \ \ \Omega_+=\{x\in \Omega\ :\ \rho>0 \}.
\]
 We also set
$\Omega_0=\{x\in \Omega\ :\ \rho=0 \}$ (actually, it can be shown that $\Omega_0=\emptyset$ but it is not important for the proof). It has been shown in
\cite{alikakosPreprint} that the energy of $u$ equals
\[
E(u;\Omega)=\frac{1}{2}\int_{\Omega}^{}|\nabla
\rho|^2dx+\frac{1}{2}\int_{\Omega_+}^{}\rho^2 |\nabla \nu|^2
dx+\int_{\Omega}^{}W(\rho \nu)dx.
\]
Let
\[
\tilde{u}(x)=\left\{\begin{array}{ll}
                      \textrm{min}\left\{\rho(x),r \right\}\alpha\left(\rho(x) \right)\nu(x), & x\in \Omega_+\cap \{\rho<2r \}, \\
  &   \\
                      0, & x\in \Omega_0\cup \{\rho \geq 2r \},
                    \end{array}
 \right.
\]
where $\alpha(\cdot)$ is the auxiliary function
\[
\alpha(\tau)=\left\{\begin{array}{ll}
                      1, & \tau\leq r, \\
                        &   \\
                      \frac{2r-\tau }{r},  & r\leq \tau \leq 2 r, \\
                        &   \\
                      0, & \tau \geq 2 r.
                    \end{array}
 \right.
\]
It was shown in \cite{alikakosPreprint} that $\tilde{u}\in
W^{1,2}(\Omega;\mathbb{R}^m)\cap L^\infty(\Omega;\mathbb{R}^m)$ and
that its energy equals
\[
E(\tilde{u};\Omega)=\frac{1}{2}\int_{\Omega}^{}|\nabla
\tilde{\rho}|^2dx+\frac{1}{2}\int_{\tilde{\Omega}_+}^{}\tilde{\rho}^2
|\nabla \nu|^2 dx+\int_{\Omega}^{}W(\tilde{\rho} \nu)dx,
\]
where $\tilde{\rho}(x)=|\tilde{u}(x)|$ and $\tilde{\Omega}_+=\{x\in
\Omega\ :\ \tilde{\rho}>0 \}$. Note that, thanks to (\ref{eqbdryAF}), we have
\begin{equation}\label{eqcontra}
u=\tilde{u}\ \ \textrm{on}\ \ \partial \Omega\ \ \textrm{and}\ \
|\tilde{u}|\leq r\ \ \textrm{a.e.\ in}\ \ \Omega.
\end{equation}
It follows readily that
\[
E(\tilde{u};\Omega)\leq E(u;\Omega),
\]
see also the proof in \cite{alikakosPreprint}. Consequently,
$\tilde{u}$ is also a minimizer in $\Omega$ subject to the same boundary
conditions as $u$. Hence, the function $\tilde{u}$ is smooth and
satisfies
\[
\Delta \tilde{u}=\nabla W(\tilde{u})\ \ \textrm{in}\ \ \Omega.
\]

We are now set to show that assertion (\ref{eqAFassert}) holds.
Suppose, to the contrary, that
\begin{equation}\label{eqcontra2}
|u(x_0)|>r\ \ \textrm{for some}\ \ x_0\in \Omega.
\end{equation}
We will first exclude the case
\[
r\leq \rho(x)\leq 2r\ \ \textrm{for all}\ \ x\in \Omega.
\]
If not, the function
\[
\hat{u}=r\nu(x)\in W^{1,2}(\Omega;\mathbb{R}^m)\cap
L^\infty(\Omega;\mathbb{R}^m)
\]
would have strictly less energy then $u$ (because
$\int_{\Omega}^{}|\nabla \rho|^2dx>0$) while $\hat{u}=u$ on
$\partial \Omega$, which is impossible. Next, we exclude entirely
the case
\[
r\leq \rho(x),\ \ x\in \Omega.
\]
If not, there would exist $x_1\in \Omega$ such that $\rho(x_1)>2r$.
This implies that $\tilde{u}=0$ on a set of positive measure
containing $x_1$. Since $\nabla W(u)$ is locally Lipschitz
continuous, we see that $\tilde{u}$ satisfies the linear system
\[
\Delta \tilde{u}=Q(x)\tilde{u}\ \ \textrm{in}\ \ \Omega, \ \
\textrm{where} \ \ Q(x)=\int_{0}^{1}W_{uu}(t \tilde{u}) dt\ \
\textrm{is bounded in norm}.
\]
On the other hand, because $\tilde{u}=0$ on a set of positive
measure, by the unique continuation principle for linear elliptic
systems (see \cite{hormander}), we infer that $\tilde{u}\equiv 0$
which is clearly impossible (otherwise $|u|\geq 2r$ in $\Omega$).
Therefore, we may assume that there exists a set $D\subset \Omega$
with positive measure such that
\[
u=\tilde{u}\ \ \textrm{in}\ \ D.
\]
As before, by considering the linear system for the difference
$u-\tilde{u}$, we conclude that $\tilde{u}\equiv u$. We have thus
arrived at a contradiction, because of (\ref{eqcontra}) and
(\ref{eqcontra2}).
\end{proof}

\textbf{Acknowledgements.} Supported by the ``Aristeia'' program of the Greek
Secretariat for Research and Technology.


\begin{thebibliography}{50}
\bibitem{alamaGui}
{\sc S. Alama}, {\sc L. Bronsard}, and {\sc C. Gui},
\emph{Stationary layered solutions in $\mathbb{R}^2$ for an
Allen--Cahn system with multiple well potential}, Calc. Var.
\textbf{5} (1997), 359--390.


\bibitem{alesioITE}
{\sc F. Alessio}, \emph{Stationary layered solutions for a system of Allen--Cahn type equations},   Indiana Univ. Math. J.  \textbf{62} (2013),  1535--1564


\bibitem{alikakosBasicFacts}
 {\sc N. D. Alikakos}, \emph{Some basic facts on the system
$\Delta u - W_u(u) = 0$}, Proc. Amer. Math. Soc. \textbf{139}
(2011), 153-–162.





\bibitem{alikakosARMA}
{\sc N. D. Alikakos}, and {\sc G. Fusco}, \emph{Entire solutions to
equivariant elliptic systems with variational structure},  Arch.
Rat. Mech. Anal. \textbf{202} (2011), 567-–597.




\bibitem{alikakosPreprint}
{\sc N.D. Alikakos}, and {\sc G. Fusco}, \emph{A maximum principle
for systems with variational structure and an application to
standing waves}, Arxiv preprint (2014), or     arXiv:1311.1022

\bibitem{AlikakosDensity}
{\sc N.D. Alikakos}, and {\sc G. Fusco}, \emph{Density estimates for vector minimizers and applications},
Arxiv preprint (2014), or  arXiv:1403.7608


\bibitem{baldo}
{\sc S. Baldo}, \emph{Minimal interface criterion for phase
transitions in mixtures of Cahn-Hilliard fluids}, Ann. Inst. Henri
Poincar\'{e}  Anal. Non Lin\'{e}aire \textbf{7} (1990), 67-–90.


\bibitem{batesFuscoSmyrnelis}
{\sc P. W. Bates}, {\sc G. Fusco}, and {\sc P. Smyrnelis},
\emph{Entire solutions with six-fold junctions to elliptic gradient
systems with triangle symmetry}, Advanced Nonlinear Studies
\textbf{13} (2013), 1--12.

\bibitem{berestykiARMA}
{\sc H. Berestycki}, {\sc T-C Lin}, {\sc J. Wei}, and  {\sc C.
Zhao}, \emph{On Phase-Separation Models: Asymptotics and Qualitative
Properties}, Arch. Rational Mech. Anal. \textbf{208} (2013),
163--200.


\bibitem{bethuel}
{\sc F. B\'{e}thuel}, {\sc H. Brezis}, and {\sc F. H\'{e}lein},
Ginzburg-Landau vortices, PNLDE \textbf{13}, Birkh\"{a}user Boston,
1994.

\bibitem{bronReih}
{\sc L. Bronsard}, and {\sc F. Reitich}, \emph{On three-phase
boundary motion and the singular limit of a vector-valued
Ginzburg-Landau equation}, Archive for Rational Mechanics and
Analysis \textbf{124}  (1993), 355--379.




\bibitem{cabre}
{\sc X. Cabr\'{e}}, and {\sc J. Terra}, \emph{Saddle--shaped
solutions of bistable diffusion equations in all of}
$\mathbb{R}^{2m}$, J. Eur. Math. Soc.  \textbf{11} (2009), 819-–843.

\bibitem{cafamodica}
{\sc L. Caffarelli}, {\sc N.  Garofalo}, and {\sc F. Seg\'{a}la},
\emph{A gradient bound for entire solutions of quasi--linear
equations and its consequences}, Comm. Pure Appl. Math. \textbf{47}
(1994), 1457–-1473.


\bibitem{cafaCordoba}
{\sc L. Caffarelli}, and {\sc A. C\'{o}rdoba}, \emph{Uniform
convergence of a singular perturbation problem}, Comm. Pure  Appl.
Math. \textbf{48} (1995), 1--12.




\bibitem{caffareliLin}
{\sc L. A. Caffarelli}, and {\sc F. Lin}, \emph{Singularly perturbed
elliptic systems and multi-valued harmonic functions with free
boundaries}, Journal of AMS \textbf{21} (2008), 847--862.


\bibitem{dancerPlanes}
{\sc E. N. Dancer}, \emph{Some notes on the method of moving
planes}, Bull. Austral. Math. Soc. \textbf{46} (1992), 425--434.



\bibitem{evansGariepy}
{\sc L.C. Evans}, and {\sc R.F. Gariepy}, Measure theory and fine
properties of functions, CRC Press, Boca Raton, FL, 1992.

\bibitem{evans}
{\sc L.C. Evans}, Partial differential equations, Graduate studies
in mathematics, American Mathematical Society \textbf{2} (1998).





\bibitem{farinatwores}
{\sc A. Farina}, \emph{Two results on entire solutions of
Ginzburg-Landau system in higher dimensions}, J. Funct. Anal.
{\textbf{214}} (2004), 386–-395.


\bibitem{farinaFlat}
{\sc A. Farina}, and {\sc E. Valdinoci}, \emph{Flattening results
for elliptic PDEs in unbounded domains with applications to
overdetermined problems},
 Arch. Rational Mech. Anal.
\textbf{195} (2010) 1025–-1058.

\bibitem{fonsecaTartarEdin}
{\sc I. Fonseca}, and {\sc L. Tartar}, \emph{The gradient theory of
phase transitions for systems with two potential wells}, Proc. Roy.
Soc. Edinburgh Sect. A \textbf{111} (1989), 89--102.

\bibitem{fuscoTrans}
 {\sc G. Fusco}, {\sc F. Leonetti}, and {\sc C. Pignotti}, \emph{A uniform
estimate for positive solutions of semilinear elliptic equations},
Trans. Amer. Math. Soc. \textbf{363} (2011), 4285-–4307.

\bibitem{fuscoPreprint}
{\sc G. Fusco}, \emph{Equivariant entire solutions to the elliptic
system $\Delta u - W_u(u) = 0$ for general $G$-invariant
potentials}, Calc. Var. \textbf{49} (2014), 963--985.



\bibitem{fuscoCPAA}
{\sc G. Fusco}, \emph{On some elementary properties of vector
minimizers of the Allen-Cahn energy}, Comm. Pure. Appl. Analysis
\textbf{13} (2014), 1045--1060.


\bibitem{Gilbarg-Trudinger}
{\sc D. Gilbarg}, and {\sc N. S. Trudinger}, Elliptic partial
differential equations of second order, second ed., Springer-Verlag,
New York, 1983.


\bibitem{guiSchat}
{\sc C. Gui},   and {\sc M. Schatzman}, \emph{Symmetric quadruple
phase transitions}, Indiana Univ. Math. J.
\textbf{57} (2008), 781--836.

\bibitem{hormander}
{\sc L. H\"{o}rmander}, The analysis of linear partial differential
operators III: Pseudo-differential operators, Springer-Verlag, 1985.

\bibitem{kinderler}
{\sc D. Kinderlehrer}, and  {\sc G. Stampacchia}, An introduction to variational inequalities and their applications, Academic Press, New York, 1980.


\bibitem{lopez}
{\sc O. Lopes}, \emph{Radial and nonradial minimizers for some
radially symmetric functionals}, Electr. J. Diff. Equations
\textbf{1996} (1996), 1--14.




\bibitem{modica}
{\sc L. Modica}, \emph{A gradient bound and a Liouville theorem for
nonlinear Poisson equations}, Comm. Pure Appl. Math. \textbf{38}
(1985), 679--684.

\bibitem{modicaProc}
{\sc L. Modica}, \emph{Monotonicity of the energy for entire
solutions of semilinear elliptic equations}, in Partial differential
equations and the calculus of variations, Essays in honor of Ennio
De Giorgi, Vol. 2, edited by F. Colombini, A. Marino, and L. Modica.
Birkh¨auser, Boston, MA, 1989, 843-–850.


\bibitem{rabi}
{\sc P.H. Rabinowitz}, \emph{Periodic and heteroclinic orbits for a
periodic Hamiltonian system}, Annales de l'institut Henri Poincaré
A-N \textbf{6} (1989), 331--346.

\bibitem{saez}
{\sc M. S\'{a}ez Trumper}, \emph{Existence of a solution to a
vector-valued Allen-Cahn equation with a three well potential},
Indiana Univ. Math. J. \textbf{58} (2009), 213--268.


\bibitem{schoen}
{\sc R. Schoen}, Lecture notes on general relativity, Stanford
University, 2009,
 see also \url{http://math.stanford.edu/~schoen/trieste2012/}

\bibitem{serfaty}
{\sc E. Sandier}, and {\sc S. Serfaty}, Vortices in the magnetic
Ginzburg–-Landau model, Birkh\"{a}user, Basel, 1997.


\bibitem{smyrnelis}
{\sc P. Smyrnelis}, \emph{Gradient estimates for semilinear elliptic
systems and other related results}, Arxiv preprint (2014), or
arXiv:1401.4847


\bibitem{sourdisUnif}
{\sc C. Sourdis}, \emph{Uniform estimates for positive solutions of
semilinear elliptic equations and related Liouville and
one-dimensional symmetry results}, Arxiv preprint (2014), or
arXiv:1207.2414.

\bibitem{Sourheteroclinic}
{\sc C. Sourdis}, \emph{The heteroclinic connection problem for general double-well
potentials}, Arxiv preprint (2013), or     arXiv:1311.2856

\bibitem{sourdis14}
{\sc C. Sourdis}, \emph{Optimal energy growth lower bounds for a
class of solutions to the vectorial Allen-Cahn equation}, Arxiv
preprint (2014), or arXiv:1402.3844v2.


\bibitem{sourConfi}
{\sc C. Sourdis}, \emph{On the confinement of bounded entire solutions to a class of semilinear elliptic systems}, Arxiv
preprint (2014), or arXiv:1402.2495





\bibitem{sternbergZumb}
{\sc P. Sternberg}, and {\sc K. Zumbrun}, \emph{Connectivity of
phase boundaries in strictly convex domains}, Arch. Rational Mech.
Anal. \textbf{141} (1998), 375-–400.


\bibitem{taylor}{\sc M.E. Taylor}, Partial Differential Equations II:
Qualitative Studies of Linear Equations, second ed., 2011.



\bibitem{villegasCpaa}
{\sc S. Villegas},  \emph{Nonexistence of nonconstant global
minimizers with limit at $\infty$ of semilinear elliptic equations
in all of $\mathbb{R}^n$}, Comm. Pure Appl.  Anal. \textbf{10}
(2011), 1817--1821.





\bibitem{zhao}{\sc M. Zhao}, \emph{Pohozaev identities}, personal
communication (2014).


\end{thebibliography}
\end{document}